\newcommand{\rr}{\mathbf{\mathbb{R}}}
\newcommand{\nn}{\mathbf{\mathbb{N}}}
\newcommand{\vf}{\varphi}
\newcommand{\dist}{\operatorname{dist}}
\newcommand{\Go}{\operatorname{\varrho_0}}
\newtheorem{lemma}{Lemma}
\newtheorem{theorem}{Theorem}
\newtheorem{cor}{Corollary}
\newtheorem{proposition}{Proposition}
\newtheorem{st}{Step}
\theoremstyle{remark}
\title{Local $C^r$-right equivalence of $C^{r+1}$ functions}
\author{Piotr Migus}
\thanks{2010 {\it Mathematics Subject Classification}: 58K40, 14B05.} 
\thanks{{\it Key words and phrases}: Diffeomorphism, $C^r$ equivalence, right equivalence.}
\thanks{This research was partially supported by NCN, grant 2012/07/B/ST1/03293.}
\address{Faculty of Mathematics and Computer Science, Wydzia\l{} Matematyki i Informatyki, Uniwersytet \L{}\'o{}dzki, Banacha 22, 90-238 \L{}\'o{}d\'z{}, Poland}
\date{\today}
\email{migus@math.uni.lodz.pl}
\begin{document}
\begin{abstract}
Let $f,g:(\rr^n,0)\rightarrow (\rr,0)$ be $C^{r+1}$ functions, $r\in \nn$. We will show that if $\nabla f(0)=0$ and there exist a neigbourhood $U$ of $0\in \rr^n$ and some constant $C>0$ such that 
$$
\left|\partial^m(g-f)(x)\right|\leq C \left|\nabla f(x)\right|^{r+2-|m|}, \quad x\in U,
$$
for any $m\in \nn_0^n$ such that $|m|\leq r$, then there exists a $C^r$ diffeomorphism $\vf:(\rr^n,0)\rightarrow (\rr^n,0)$ such that $f=g\circ \vf$ in a neighbourhood of $0$.
\end{abstract}

\maketitle

\section{Introduction}
Let $f,g:(\rr^n,0)\rightarrow (\rr,0)$. We say that $f$ and $g$ are $C^r$-\emph{right equivalent} if there exists a $C^r$ diffeomorphism $\vf:(\rr^n,0)\rightarrow (\rr^n,0)$  such that $f=g \circ \vf$ in a neighbourhood of $0$. Let $\nn$ denote the set of positive integers and $\nn_0=\nn\cup\{0\}$. A norm in $\rr^n$ we denote by $|\cdot|$ and by $\dist(x,V)$ - the distance of a point $x\in \rr^n$ to a set $V\subset\rr^n$. By $C^k(n)$, where $k,n\in \nn$, we denote the set of $C^k$ functions $(\rr^n,0)\rightarrow \rr$. Let $\mathcal{J}_fC^k(n)$ be the ideal in $C^k(n)$ generated by $\frac{\partial f}{\partial x_1},\dots, \frac{\partial f}{\partial x_n}$. The ideal $\mathcal{J}_fC^k(n)$ is called \emph{the Jacobi ideal in $C^{k}(n)$} (we will call it in short \emph{the Jacobi ideal}). 

In this paper we address the question under what conditions $C^r$-right equivalence of $C^{r+1}$ functions holds. There exists result which deals with $C^r$-right equivalence of $C^{r+2}$, namely J.~Bochnak has used Tougeron's Implicit Theorem to proved the following theorem (\cite[Theorem 1]{boch})

\noindent \emph{Let $f,g:(\rr^n,0)\rightarrow (\rr,0)$ be $C^{r+2}$ functions such that $\nabla f(0)=0$, $r \in \nn$. If $(g-f)\in \mathfrak{m}(\mathcal{J}_fC^{r+1}(n))^{2}$ then $f$ and $g$ are $C^r$-right equivalent. By $\mathfrak{m}$ we mean maximal ideal in the set of $C^{r+1}$ functions $(\rr^n,0)\rightarrow \rr$. }

Results presented in this paper are proven in the classical spirit of Kuiper-Kuo Theorem which deals with $C^0$-right equivalence of $C^r$ functions with isolated singularity at $0\in \rr^n$ (\cite{Kui}, \cite{Kuo} see also \cite{Sp}).  Moreover, in compare to Bochnak Theorem we assume geometric condition for $(g-f)$ instead algebraic condition. More precisely, we will prove the following

\begin{theorem}\label{the:2}
Let $f,g:(\rr^n,0)\rightarrow (\rr,0)$ be $C^{r+1}$ functions, $r\in \nn$. If $\nabla f(0)=0$ and there exist a neigbourhood $U$ of $\in \rr^n$ and some constant $C>0$ such that 
\begin{equation}\label{fun.zal}
\left|\partial^m(g-f)(x)\right|\leq C \left|\nabla f(x)\right|^{r+2-|m|}, \quad x\in U,
\end{equation}
for any $m\in \nn_0^n$ such that $|m|\leq r$, then $f$ and $g$ are $C^r$-right equivalent. 
\end{theorem}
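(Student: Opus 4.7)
The plan is to use the Thom--Levine homotopy method familiar from the proof of the Kuiper--Kuo theorem. I set $F_t = f + t(g-f)$ for $t \in [0,1]$ and seek a time-dependent $C^r$ vector field $X_t$ on a neighbourhood of $0$ with $X_t(0)=0$ such that
\[
\tfrac{\partial F_t}{\partial t} + \langle \nabla F_t, X_t\rangle = 0.
\]
If such an $X_t$ exists and its flow $\vf_t$ is defined on $[0,1]$, then $F_t(\vf_t(x))\equiv f(x)$ and $\vf := \vf_1$ is a $C^r$ local diffeomorphism fixing $0$ with $f = g\circ\vf$, as required. The canonical choice is
\[
X_t(x) = -\frac{(g-f)(x)}{|\nabla F_t(x)|^2}\,\nabla F_t(x),
\]
defined wherever $\nabla F_t\ne 0$. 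The hypothesis with $|m|=1$ gives $|\nabla(g-f)|\le C|\nabla f|^{r+1}$, so on a small neighbourhood $U_0\subset U$ of $0$ one has $|\nabla F_t|\ge \tfrac12|\nabla f|$; combined with $|g-f|\le C|\nabla f|^{r+2}$ this shows $X_t$ extends continuously by $0$ to $\{\nabla f = 0\}\cap U_0$ with $|X_t|\le 2C|\nabla f|^{r+1}$.

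The technical heart of the proof is showing that $X_t$ is in fact $C^r$ on $U_0$ uniformly in $t$. Writing $X_t = (g-f)\cdot\Psi_t$ with $\Psi_t = -\nabla F_t/|\nabla F_t|^2$ and applying Leibniz,
\[
\partial^m X_t = \sum_{\alpha+\be=m}\binom{m}{\alpha}\partial^\alpha(g-f)\cdot\partial^\be\Psi_t.
\]
A routine quotient-rule induction, using that derivatives of $\nabla F_t$ of orders up to $r$ are bounded on (a compact sub-neighbourhood of) $U_0$ (since $F_t\in C^{r+1}$) together with $|\nabla F_t|\ge \tfrac12|\nabla f|$, gives $|\partial^\be\Psi_t|\le C_1|\nabla f|^{-|\be|-1}$. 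Combined with the hypothesis $|\partial^\alpha(g-f)|\le C|\nabla f|^{r+2-|\alpha|}$ for $|\alpha|\le r$, this yields the crucial estimate
\[
|\partial^m X_t(x)|\le C_2\,|\nabla f(x)|^{r+1-|m|},\qquad |m|\le r,\ x\in U_0\setminus\{\nabla f = 0\}.
\]
For $|m|\le r$ the right-hand side tends to $0$ at every critical point of $f$, and a standard argument (continuity of partials combined with the mean value theorem) then upgrades $X_t$ to a genuine $C^r$ map on $U_0$ with all partials of orders $\le r$ vanishing on $\{\nabla f=0\}$. The main obstacle is the bookkeeping in the extreme case $|m|=r$: there $\partial^\be\Psi_t$ may involve $(r+1)$-st derivatives of $F_t$ which are only continuous, not controlled by the hypothesis, so the whole argument hinges on the fact that the exponent $r+1-|m|$ remains strictly positive precisely up to order $r$.

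Having established $X_t\in C^r(U_0)$ uniformly in $t\in[0,1]$ with $X_t(0)=0$, I would conclude by Gr\"onwall's inequality that for $x$ in a sufficiently small neighbourhood of $0$ the ODE $\dot y = X_t(y)$, $y(0) = x$, has a solution $\vf_t(x)$ defined for all $t\in [0,1]$ and staying in $U_0$. Standard smooth dependence on initial conditions then ensures that $\vf_t$ is a $C^r$ family of local diffeomorphisms fixing $0$, and by construction $f = g\circ \vf_1$, which completes the proof.
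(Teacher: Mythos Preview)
Your proposal is correct and follows essentially the same Kuiper--Kuo homotopy approach as the paper: the paper's normalized field $W(\xi,x)=\frac{1}{X_1-1}(X_2,\dots,X_{n+1})$ simplifies algebraically to exactly your $X_t=-\dfrac{(g-f)}{|\nabla_x F_t|^2}\nabla_x F_t$, so the two constructions coincide. The only places the paper is more explicit are in converting the bound $|\nabla f|^{r+1-|m|}$ to $\dist(x,Z)^{r+1-|m|}$ via the Lipschitz property of $\nabla f$ (its Lemma~2) and in spelling out the inductive ``standard argument'' you invoke for the $C^r$ extension across $Z$ (its Step~3).
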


After slight modification of the above theorem, we will obtain some sufficient condition for $C^0$-right equivalence (Theorem \ref{the:3}). Moreover, we will see that Theorem \ref{the:2} implies Theorem \ref{the:4}, where we assume that $(g-f)$ belongs to Jacobi ideal of $f$ to some power depends on $r$. It is worth mention about author's result (\cite{mgs}) where it has proved that  if two analytic functions $f,g$ are such that $(g-f)\in (f)^{r+2}$ then $f$ and $g$ are $C^r$-right equivalent, $(f)$ denote ideal generated by $f$. In this paper we will also prove that if two real analytic functions are $C^1$-right equivalent then they have the same exponent in the {\L}ojasiewicz gradient inequality (Proposition \ref{pro:2}).

\section{Auxiliary results}
Let us start this section from some obvious lemma. 
Let $M,m,k,r\in\nn$, $k\geq r$ and $M>r$. Moreover let $p,q_1,\dots,q_m\in C^k(n)$ and let $\mathcal{Q}C^{k}(n)$ denote the ideal in $C^k(n)$ generated by $q_1,\dots,q_m$. 

\begin{lemma}\label{lem:1}
If $p\in (\mathcal{Q}C^{k}(n))^M$ then 
\begin{itemize}
\item[(i)] $\frac{\partial^r p}{\partial x_{i_1} \dots \partial x_{i_r}}\in (\mathcal{Q}C^{k-r}(n))^{M-r}$ for $i_1,\dots,i_r \in \{1,\dots,n\}$,
\item[(ii)] $|p(x)|\leq  C |(q_1(x),\dots,q_n(x))|^M$ in a neighbourhood of $0$ and for some positive constant $C$.
\end{itemize}
\end{lemma}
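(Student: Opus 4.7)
The plan is to reduce everything to the canonical form of elements in a power of an ideal. By definition, any $p\in (\mathcal{Q}C^k(n))^M$ can be written as a finite sum
\[
p=\sum_{j=1}^N h_{j,1}h_{j,2}\cdots h_{j,M},
\]
with each $h_{j,l}\in \mathcal{Q}C^k(n)$, and in turn $h_{j,l}=\sum_{i=1}^m b_{j,l,i}\,q_i$ for suitable $b_{j,l,i}\in C^k(n)$. Both conclusions are read off from this representation.

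For part (ii) I would bound each factor pointwise on a compact neighbourhood $V$ of $0\in\rr^n$, where the continuous coefficients $b_{j,l,i}$ are uniformly bounded, say by constants $M_{j,l,i}$. This gives $|h_{j,l}(x)|\le(\sum_i M_{j,l,i})\,|(q_1(x),\dots,q_m(x))|$ on $V$; multiplying $M$ such inequalities and summing over $j$ yields the desired estimate with $C=\sum_j \prod_l(\sum_i M_{j,l,i})$.

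For part (i) I would induct on $r$. The base case $r=1$ is the Leibniz rule: differentiating $h_{j,1}\cdots h_{j,M}$ with respect to $x_i$ gives a sum of $M$ terms, each keeping $M-1$ factors in $\mathcal{Q}C^{k-1}(n)$ intact and producing one new factor $\partial_{x_i}h_{j,l}\in C^{k-1}(n)$. That new factor can be absorbed as a $C^{k-1}$-coefficient of one of the remaining ideal elements, so the whole sum lies in $(\mathcal{Q}C^{k-1}(n))^{M-1}$. The inductive step applies the same argument once more to the $(r-1)$-fold derivative, decrementing both the smoothness class and the power of the ideal by one; the hypothesis $M>r$ ensures $M-r\ge 1$ throughout, so the conclusion remains meaningful.

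No genuine obstacle appears; the lemma is essentially bookkeeping for Leibniz's rule. The only point to remember is the trivial inclusion $(\mathcal{Q}C^{k-1}(n))^M\subset (\mathcal{Q}C^{k-1}(n))^{M-1}$, which is needed if one prefers to differentiate a representation of the form $p=\sum_{|\alpha|=M} a_\alpha q^\alpha$: the terms in which the derivative lands on $a_\alpha$ rather than on a generator $q_i$ still lie in $(\mathcal{Q}C^{k-1}(n))^{M}$, hence a fortiori in $(\mathcal{Q}C^{k-1}(n))^{M-1}$.
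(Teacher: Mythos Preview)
Your argument is correct; the paper itself states this lemma as ``obvious'' and gives no proof, and what you have written is precisely the routine verification one would supply --- expand $p$ as a finite sum of $M$-fold products of ideal elements, use Leibniz and induction for (i), and bound the continuous coefficients on a compact neighbourhood for (ii). There is nothing to compare against.
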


From the above we obtain at once

\begin{cor}\label{wn:lem:1}
Let $f,g:(\rr^n,0)\rightarrow (\rr,0)$ be $C^{r+1}$ functions, $r \in \nn$. If $(g-f)\in (\mathcal{J}_fC^{r}(n))^{r+2}$, then there exists a neighbourhood $U$ of $0\in \rr^n$ of $0\in\rr^n$ and a constant $C>0$ such that 
$$
\left|\partial^m(g-f)(x)\right|\leq C\left|\nabla f(x)\right|^{r+2-|m|}, \quad x\in U.
$$
for any $m\in \nn_0^n$ such that $|m|\leq r$.
\end{cor}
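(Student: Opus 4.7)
The plan is to invoke Lemma~1 directly with $p := g-f$, $q_i := \partial f / \partial x_i$ for $i = 1, \ldots, n$, taking the role of the lemma's $k$ to be $r$ and of its $M$ to be $r+2$. Both hypotheses of the lemma hold: $p, q_1, \ldots, q_n$ all lie in $C^r(n)$ because $f \in C^{r+1}$ (and $g-f$ is even better), and $p \in (\mathcal{J}_f C^r(n))^{r+2}$ by assumption. Since the generators of the Jacobi ideal satisfy $|(q_1(x), \ldots, q_n(x))| = |\nabla f(x)|$, any application of Lemma~1(ii) will automatically produce bounds of the shape appearing in the corollary.

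First, for each multi-index $m \in \nn_0^n$ with $|m| \leq r$, I would view $\partial^m p$ as an iterated partial $\frac{\partial^{|m|} p}{\partial x_{i_1} \cdots \partial x_{i_{|m|}}}$ and apply Lemma~1(i) with the lemma's $r$ replaced by $|m|$. The compatibility conditions $k \geq |m|$ and $M > |m|$ reduce to $r \geq |m|$ and $r+2 > |m|$, both true. This yields
$$
\partial^m(g-f) \in \left(\mathcal{J}_f C^{r-|m|}(n)\right)^{r+2-|m|}.
$$
Second, I would apply Lemma~1(ii) to $\partial^m(g-f)$ to obtain a neighbourhood $U_m$ of $0$ and a constant $C_m > 0$ with
$$
|\partial^m(g-f)(x)| \leq C_m |\nabla f(x)|^{r+2-|m|}, \qquad x \in U_m.
$$

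Finally, since the set $\{m \in \nn_0^n : |m| \leq r\}$ is finite, setting $U := \bigcap_{|m|\leq r} U_m$ and $C := \max_{|m|\leq r} C_m$ produces the uniform neighbourhood and constant required by the statement. As the whole argument is a direct unpacking of Lemma~1, there is no real obstacle; the only point worth checking is that the regularity index $r-|m|$ and the exponent $r+2-|m|$ do not drop below admissible values, which is guaranteed by the assumption $|m| \leq r$.
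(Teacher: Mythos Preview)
Your proposal is correct and is exactly the argument the paper intends: the paper simply writes ``From the above we obtain at once'' after Lemma~1, and your proof is a faithful unpacking of that sentence, chaining part~(i) to pass from $g-f$ to $\partial^m(g-f)\in(\mathcal{J}_fC^{r-|m|}(n))^{r+2-|m|}$ and then part~(ii) to obtain the pointwise bound, with the finite intersection/maximum at the end. The only cosmetic point is that for $|m|=0$ you can skip part~(i) and apply~(ii) directly, since Lemma~1 literally requires its differentiation order to lie in $\nn$; this is harmless.
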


The next two lemmas come from \cite{mgs} (respectively Lemma 2 and Lemma 3).

\begin{lemma}\label{lem:2}
Let $f:(\rr^n,0)\rightarrow(\rr,0)$ be a locally lipschitzian. Then there exist a neighbourhood $U$ at $0\in \rr^n$, constant $C>0$ such that for any $x\in U$, $\left| f(x)\right| \leq C \dist (x,V_f) $ ($V_f$ denote zero set of $f$).
\end{lemma}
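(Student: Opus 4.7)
The plan is to use directly the local Lipschitz property of $f$ together with the observation that $0\in V_f$ (since $f(0)=0$), which guarantees that $V_f$ is nonempty arbitrarily close to any $x$ near the origin. The whole lemma is really the remark that for a Lipschitz function the value at a point is controlled by its value at any nearby zero, plus the Lipschitz constant times the distance to that zero; taking infimum over zeros yields the stated inequality.

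First I would fix $r>0$ and $L>0$ such that $f$ is $L$-Lipschitz on the ball $B(0,2r)$; this is possible by the assumption of local Lipschitzness. I would then take $U=B(0,r)$. For any $x\in U$, since $0\in V_f$, we have
$$\dist(x,V_f)\leq |x|<r,$$
so every $y\in V_f$ that almost realises this distance satisfies
$$|y|\leq |x|+|x-y|<2r,$$
i.e.\ lies in the ball where the Lipschitz estimate applies.

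Next, for any such $y\in V_f\cap B(0,2r)$, using $f(y)=0$ together with the Lipschitz estimate gives
$$|f(x)|=|f(x)-f(y)|\leq L|x-y|.$$
Passing to the infimum over $y\in V_f\cap B(0,2r)$ yields $|f(x)|\leq L\,\dist(x,V_f)$, which is the desired inequality with $C=L$.

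There is essentially no obstacle in the argument; the only detail requiring care is ensuring that the infimum defining $\dist(x,V_f)$ may be restricted to points lying inside the Lipschitz neighborhood $B(0,2r)$, which is precisely why one passes from $B(0,2r)$ to the smaller $U=B(0,r)$ in the first step.
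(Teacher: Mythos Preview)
Your proof is correct. The paper itself does not prove this lemma but simply cites it from \cite{mgs}; your direct argument---choosing a ball on which $f$ is Lipschitz, using $0\in V_f$ to confine nearly-minimising zeros to that ball, and applying the Lipschitz estimate---is exactly the natural and standard one.
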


\begin{lemma}\label{lem:3}
Let $\xi,\eta:U\rightarrow \rr$ be $C^{|k|}$ functions, $k\in\nn_0^n$, such that
$$
A_1|\eta(x)|^2 \leq|\xi(x)|\leq A_2|\eta(x)|^2, \quad |\partial \xi(x)|\leq A_3 |\eta(x)|, \quad x\in U,
$$
where $A_1,A_2,A_3>0$ are a positive constants and $U\subset \rr^n$ is a neighbourhood of the origin,. Then
$$
\left|\partial^k \left( \frac{1}{\xi(x)}\right)\right|\leq B|\eta(x)|^{-|k|-2},\quad x \in U,
$$
for some constant $B>0$.
\end{lemma}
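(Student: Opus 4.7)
The plan is to expand $\partial^k(1/\xi)$ via Faa di Bruno's formula applied to the composition $f\circ\xi$, where $f(y)=1/y$. Since $f^{(j)}(y)=(-1)^j j!/y^{j+1}$, the lower bound $|\xi|\geq A_1|\eta|^2$ immediately yields $|f^{(j)}(\xi)|\leq C_j|\eta|^{-2j-2}$. The multivariate Faa di Bruno writes $\partial^k(1/\xi)$ as a finite linear combination of terms of the form
$$f^{(j)}(\xi)\cdot\prod_{i=1}^{j}\partial^{\beta_i}\xi,\qquad \beta_1+\cdots+\beta_j=k,\ \ |\beta_i|\geq 1,\ \ 1\leq j\leq |k|,$$
so the problem reduces to controlling products of higher-order derivatives of $\xi$.

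The preparatory step is to upgrade the hypotheses---which only explicitly control $\xi$ and $\partial\xi$---to a uniform bound $|\partial^\alpha\xi(x)|\leq C_\alpha|\eta(x)|^{2-|\alpha|}$ for every $0\leq|\alpha|\leq|k|$. Since $\xi\in C^{|k|}(U)$, each $\partial^\alpha\xi$ is locally bounded on $U$. I would shrink $U$ so that additionally $|\eta(x)|\leq 1$ on $U$; then $|\eta|^{2-|\alpha|}\geq 1$ for $|\alpha|\geq 2$, so a crude constant bound $|\partial^\alpha\xi|\leq M_\alpha$ is automatically dominated by $M_\alpha|\eta|^{2-|\alpha|}$. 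The cases $|\alpha|\in\{0,1\}$ come directly from the stated hypotheses.

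Substituting these bounds into each Faa di Bruno term gives
$$|f^{(j)}(\xi)|\prod_{i=1}^{j}|\partial^{\beta_i}\xi|\leq C|\eta|^{-2j-2}\prod_{i=1}^{j}|\eta|^{2-|\beta_i|} = C|\eta|^{-2j-2+2j-|k|} = C|\eta|^{-|k|-2},$$
using $\sum_i|\beta_i|=|k|$; summing the finitely many terms yields the desired conclusion $|\partial^k(1/\xi)|\leq B|\eta|^{-|k|-2}$. The main subtlety lies in the upgrading step: Faa di Bruno inevitably introduces higher-order derivatives of $\xi$ that are not explicitly controlled by the hypotheses, so one has to exploit the $C^{|k|}$-regularity of $\xi$ together with the freedom to shrink $U$ so that $|\eta|\leq 1$, thereby converting local boundedness into bounds of the form $C_\alpha|\eta|^{2-|\alpha|}$ that make the exponent arithmetic close.
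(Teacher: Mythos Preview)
The paper does not give its own proof of this lemma; it merely records that the result is taken from \cite{mgs} (Lemma~3 there). So there is no in-paper argument to compare against, and I can only assess your proposal on its own merits.

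Your Fa\`a di Bruno strategy is sound and the exponent arithmetic closes exactly as you indicate: each term $f^{(j)}(\xi)\prod_i\partial^{\beta_i}\xi$ contributes $|\eta|^{-2j-2}\cdot|\eta|^{2j-|k|}=|\eta|^{-|k|-2}$ once you have the bounds $|\partial^\alpha\xi|\le C_\alpha|\eta|^{2-|\alpha|}$. The one wrinkle is your ``shrink $U$ so that $|\eta|\le 1$'' step: the hypotheses do not assume $\eta(0)=0$, so you cannot in general force $|\eta|\le 1$ by shrinking. What you can always do, after passing to a relatively compact neighbourhood of the origin, is get uniform bounds $|\eta|\le M$ and $|\partial^\alpha\xi|\le N_\alpha$; then for $|\alpha|\ge 2$,
\[
|\partial^\alpha\xi|\le N_\alpha = N_\alpha M^{|\alpha|-2}\cdot M^{2-|\alpha|}\le N_\alpha M^{|\alpha|-2}\,|\eta|^{2-|\alpha|},
\]
which is exactly the estimate you need. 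With this trivial adjustment the argument is complete. The shrinking of $U$ itself is harmless: the paper applies the lemma only after ``diminishing $G$ if necessary'', so a conclusion valid on a smaller neighbourhood is all that is required.
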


The last lemma in this section is slight modification of \cite[Lemma 1]{Sp}.

\begin{lemma}\label{lem:4}
Let $G \subset \mathbb{R}\times \mathbb{R}^n$ be an open set, $W:G\rightarrow \mathbb{R}^n$ be a continuous mapping and let $V \subset \mathbb{R}^n$ be a closed set. If a system 
\begin{equation}\label{eqlemmaunique}
\frac{dy}{dt}=W(t,y)
\end{equation}
has a global uniqueness of solutions property in $G\backslash (\mathbb{R}\times V)$ and if  
$$
\left|W(t,x) \right|\leq C \dist(x,V)\quad \textrm{ for } (t,x)\in U,
$$
for some constant $C>0$ and some neighbourhood $U \subset G$ of set $(\mathbb{R}\times V)\cap G$, then \eqref{eqlemmaunique} has a global uniqueness of solutions property in $G$.
\end{lemma}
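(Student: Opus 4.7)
The plan is to reduce the uniqueness question to the hypothesized off-$V$ uniqueness by showing that $V$ is \emph{invariant} under the flow: any solution that touches $V$ must lie entirely in $V$, and hence be constant, because $W$ vanishes on the slice of $V$ inside $U$.

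I would first observe that the bound $|W(t,x)|\leq C\dist(x,V)$ on $U$ forces $W(t,x)=0$ whenever $(t,x)\in U$ and $x\in V$, so any constant map $y\equiv x_0$ with $x_0\in V$ is a bona fide solution on any interval for which $\{t\}\times\{x_0\}\subset G$ (since then $(t,x_0)\in (\rr\times V)\cap G\subset U$). The heart of the argument is then the following invariance claim: if $y:I\to \rr^n$ is any solution of \eqref{eqlemmaunique} and $y(t_0)\in V$ for some $t_0\in I$, then $y(t)\in V$ for all $t\in I$. To prove it, pick $\delta>0$ so small that $(t,y(t))\in U$ for $|t-t_0|<\delta$; this is possible because $(t_0,y(t_0))\in (\rr\times V)\cap G\subset U$ and $U$ is open while $y$ is continuous. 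For such $t$,
\begin{equation*}
\dist(y(t),V)\leq |y(t)-y(t_0)|=\left|\int_{t_0}^t W(s,y(s))\,ds\right|\leq C\left|\int_{t_0}^t \dist(y(s),V)\,ds\right|,
\end{equation*}
and Gronwall's inequality applied on both sides of $t_0$ forces $\dist(y(t),V)\equiv 0$ on $(t_0-\delta,t_0+\delta)$, i.e.\ $y(t)\in V$ there. The set $\{t\in I:y(t)\in V\}$ is therefore nonempty, closed (by continuity of $y$ and closedness of $V$), and, by the argument just given applied at each of its points, open; by connectedness of $I$ it equals all of $I$.

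With invariance in hand, the rest is a standard clopen argument. Let $y_1,y_2:I\to \rr^n$ be solutions with $y_1(t_0)=y_2(t_0)=x_0$ and set $T=\{t\in I:y_1(t)=y_2(t)\}$. Then $T$ is nonempty and closed. To see that $T$ is open at $t^*\in T$: if $y_1(t^*)\notin V$, a neighbourhood of $(t^*,y_1(t^*))$ lies in $G\setminus(\rr\times V)$ and the assumed global uniqueness in $G\setminus(\rr\times V)$ yields $y_1=y_2$ on that neighbourhood; if instead $y_1(t^*)\in V$, the invariance claim applied to each $y_i$ shows both solutions take values in $V$ throughout $I$, where $W$ vanishes, so both are constant and equal to $y_1(t^*)=y_2(t^*)$, giving $T=I$ immediately. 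Hence $T$ is clopen and nonempty, so $T=I$.

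The main obstacle is the invariance step, which hinges on setting up the Gronwall estimate with the correct orientation of the integral (the outer absolute values matter when $t<t_0$) and on ensuring that the orbit of $y$ really does stay in $U$ near $t_0$, both of which are handled by openness of $U$ and continuity of $y$; once $V$ is trapped, the open-closed bookkeeping and the given uniqueness off $V$ conclude the proof routinely.
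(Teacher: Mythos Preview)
The paper does not actually prove Lemma~\ref{lem:4}; it merely states that the lemma is a slight modification of \cite[Lemma~1]{Sp} and quotes the statement. So there is no proof in the paper to compare against.

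Your argument is correct and self-contained. The key ideas---that the bound forces $W$ to vanish on $(\rr\times V)\cap G$, that a Gronwall estimate on $\dist(y(t),V)$ traps any solution touching $V$ inside $V$, and that the clopen set of agreement times then handles both the on-$V$ and off-$V$ cases---are exactly the standard mechanism behind results of this type, and each step is justified. In particular, your observation that a solution with values in $V$ has its graph in $(\rr\times V)\cap G\subset U$, hence satisfies $y'\equiv 0$ and is constant, cleanly closes the on-$V$ branch; and your care with the absolute value around the integral when $t<t_0$ is appropriate. One could streamline slightly by noting that the invariance claim already makes the dichotomy at $t^*$ unnecessary once you know whether the common initial point lies in $V$ or not, but as written the proof is complete.
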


\section{Proof of Theorem \ref{the:2}}\label{sub:2}

Let $Z$ be the zero set of $\nabla f$ and let $U\subset \rr^n$ be a neighbourhood of $0\in\rr^n$ such that $f$ and $g$ are well defined. By Lemma \ref{lem:2} there exists a positive constant $A$ such that
\begin{equation}\label{E:5}
|\nabla f(x)|\leq A \dist(x,Z)\quad\textrm{ for }x\in U.
\end{equation}
Define the function $F:\rr^{n}\times U\rightarrow \rr$ by the formula
$$
F(\xi,x)=f(x)+\xi(g-f)(x),
$$
obviously
$$
\nabla F(\xi,x)=\left((g-f)(x),\nabla f(x)+\xi\nabla (g-f)(x)\right).
$$
Let $G=\{(\xi,x)\in\rr\times U:|\xi|< \delta\}$ where $\delta \in \nn$, $\delta>2$. From the above, diminishing $U$ if necessary, we have that there exists a constant $C_1>0$ such that
\begin{equation}\label{E:7}
|\nabla f(x)|\leq C_1|\nabla F(\xi,x)|\quad\textrm{ for }(\xi,x)\in G. 
\end{equation}
Indeed,
$$
|\nabla F(\xi,x)|\geq |\nabla f(x)-\xi \nabla (g-f)(x)|\geq |\nabla f(x)|- |\xi||\nabla (g-f)(x)|.
$$
Since $r\geq 1$ then from \eqref{fun.zal} we get 
$$
|\nabla (g-f)(x)|\leq C_2|\nabla f(x)|^{r+1}\leq C_2|\nabla f(x)|^2
$$
for some positive constant $C_2$. Hence, diminishing $U$ if necessary, 
$$
|\nabla F(\xi,x)|\geq |\nabla f(x)|- |\xi|C_2|\nabla f(x)|^2\geq \frac{1}{C_1}|\nabla f(x)| \quad \textrm{ for } (\xi,x)\in G.
$$ 
Moreover, from definition of $\nabla F$ we get at once, that there exists a positive constant $C_3$ such that
\begin{equation}\label{E:7'}
|\nabla f(x)|\geq C_3|\nabla F(\xi,x)|\quad\textrm{ for }(\xi,x)\in G.
\end{equation}

Now we will show that the mapping $X:G\rightarrow \mathbb{R}^n\times \mathbb{R}$ defined by
$$
X(\xi,x)=(X_1,\dots,X_{n+1})= \left\{ \begin{array}{ll}
\frac{(g-f)(x)}{\left|\nabla F(\xi,x) \right|^2}\nabla F(\xi,x) & \textrm{ for $x\notin Z$}\\
0 & \textrm{ for $x\in Z$}
\end{array} \right.
$$
is a $C^r$ mapping. The proof of this fact will be divided into several steps.

\begin{st}\label{st:1}
The mapping $X$ is continuous in $G$.
\end{st}
Indeed, let $h_i(\xi,x)=(g-f)(x)\frac{\partial F}{\partial x_i}(\xi,x)$. Then from \eqref{fun.zal} and \eqref{E:7'} we have 
$$
|h_i(\xi,x)|=|\frac{\partial F}{\partial x_i}(\xi,x)||(g-f)(x)|\leq CC_1'|\nabla f(x)|^{r+3}, \quad (\xi,x)\in G \backslash (\rr \times Z).
$$
Moreover, from definition of $X$, \eqref{E:5} and \eqref{E:7} we obtain
\begin{equation}\label{E:8}
|X_i(\xi,x)|\leq CC_1'|\nabla f(x)|^{r+1}\leq A' \dist(x,Z)^{r+1}, \quad (\xi,x)\in G \backslash (\rr \times Z),
\end{equation}
for some constant $A'>0$. The above inequality also holds for $(\xi,x)\in G \cap (\rr \times Z)$, therefore $X$ is continuous in $G$.

\begin{st}\label{st:2}
Let $\alpha=(\alpha_0, \dots , \alpha_n) \in \nn_0^{n+1}$ be a multi-index such that $|\alpha|\leq r$, then, 
$$
|\partial^{\alpha} X_i (\xi,x)|\leq A''\dist(x,Z)^{r+1-|\alpha|} \textrm{ for } (\xi,x)\in G \backslash (\rr \times Z).
$$ 
where $\partial ^ {\alpha} X_i=\partial ^{\alpha_0}\cdots \partial^{\alpha_{n+1}}X_i=\frac{\partial^{|\alpha|}X_i}{\partial \xi^{\alpha_0}\partial x_1^{\alpha_1} \cdots \partial x_n^{\alpha_n}}.$
\end{st}
Indeed, let us take $(\xi,x)\in G \backslash (\rr \times Z)$ from Leibniz rule we have
\begin{equation}\label{E:9}
\partial^{\alpha} X_i(\xi,x)=\sum_{\beta \leq \alpha} \binom{\alpha}{\beta}\partial^{\alpha-\beta}(h_i(\xi,x))\partial^{\beta}\left( \frac{1}{|\nabla F(\xi,x)|^2}\right).
\end{equation}

Diminishing $G$ if necessary, from Lemma \ref{lem:3} we obtain
$$
\left|\partial^{\beta}\left( \frac{1}{|\nabla F(\xi,x)|^2}\right)\right|\leq\frac{A''_{\beta}}{|\nabla F(\xi,x)|^{|\beta|+2}},
$$
for some constants $A''_{\beta}>0$. Therefore from \eqref{E:9} we have
\begin{equation}\label{E:15}
|\partial^{\alpha}X_i(\xi,x)|\leq \sum_{\beta\leq \alpha}\binom{\alpha}{\beta}|\partial^{\alpha-\beta}(h_i(\xi,x))|\frac{A''_{\beta}}{|\nabla F(\xi,x)|^{2|\beta|+2}}.
\end{equation}
From \eqref{fun.zal} and \eqref{E:7'} we have 
\begin{equation}\label{E:14}
|\partial^{\alpha-\beta}(h_i(\xi,x))|\leq B_{\alpha-\beta} |\nabla f(x)|^{r+3-|\alpha|+|\beta|}
\end{equation}
for some positive constant $B_{\alpha-\beta}$. Finally from \eqref{E:15}, \eqref{E:14} \eqref{E:7} and \eqref{E:5} we obtain
\begin{align*}
|\partial^{\alpha} X_i(\xi,x)|&\leq \sum_{\beta \leq \alpha}\binom{\alpha}{\beta}B_{\alpha-\beta}|\nabla f(x)|^{r+3-|\alpha|+|\beta|}\frac{A''_{\beta}}{|\nabla F(\xi,x)|^{|\beta|+2}}\\
&\leq \sum_{\beta \leq \alpha}\binom{\alpha}{\beta}A''_{\beta}B_{\alpha-\beta}|\nabla f(x)|^{r+3-|\alpha|+|\beta|-|\beta|-2}\\
&\leq \frac{A''}{A} |\nabla f(x)|^{r+1-|\alpha|}\leq A'' \dist(x,Z)^{r+1-|\alpha|},
\end{align*}
for $(\xi,x)\in G \backslash (\rr \times Z)$ and for some constant $A''>0$.

\begin{st}\label{st:3}
Partial derivatives $\partial^{\alpha} X_i$ vanish for $(\xi,x)\in G \cap (\rr \times Z)$ and $|\alpha|\leq r$.
\end{st}

Indeed, we will  carry out induction with respect to $|\alpha|$. Let $t\in \rr$, $x \in Z$ and let $x^t_m=(x_1,\dots,x_m+t,\dots, x_n)$. For $|\alpha|=0$ hypothesis is obvious. Assume that hypothesis is true for $|\alpha|\leq r-1$. Then from Step \ref{st:2} we have
\begin{align*}
\frac{|\partial^{\alpha}X_i(\xi,x^t_m)-\partial^{\alpha}X_i(\xi,x)|}{|t|}&=\frac{|\partial^{\alpha}X_i(\xi,x^t_m)|}{|t|}\leq \frac{A''\dist(x^t_m,Z)^{r+1-|\alpha|}}{|t|}\\
&\leq \frac{A''|t|^{r+1-|\alpha|}}{|t|}=A''|t|^{r-|\alpha|}.
\end{align*}
Since $r-|\alpha|\geq r-r+1= 1$, we obtain $\partial^{\gamma}X_i(\xi,x)=0$ for $x\in Z$ and $|\gamma|=|\alpha|+1$. This completes Step \ref{st:3}.

In summary from Step \ref{st:1}, \ref{st:2} and \ref{st:3} we obtain that $X_i$ are $C^r$ functions in $G$. Therefore $X$ is a $C^r$ mapping in $G$.

Define a vector field $W:G\rightarrow \rr^n$ by the formula
\begin{equation}\label{pole1}
W(\xi,x)=\frac{1}{X_1(\xi,x)-1}(X_2(\xi,x),\dots,X_{n+1}(\xi,x)).
\end{equation}
Diminishing $U$ if necessary, we may assume that $A'\dist(x,Z)< \frac{1}{2}$. From \eqref{E:8} we obtain
\begin{equation}\label{pole2}
\left|X_1(\xi,x)-1\right|\geq 1-\left|X(\xi,x)\right|\geq 1-A'\dist(x,Z)>\frac{1}{2}, 
\end{equation}
for $(\xi,x)\in G$. Hence the field $W$ is well defined and it is a $C^r$ mapping. 

Consider the following system of ordinary differential equations
\begin{equation}\label{E:12}
\frac{dy}{dt}=W(t,y).
\end{equation}
Since $r\geq 1$, then $W$ is at least of class ${C}^1$ on $G$, so it has a uniqueness of solutions property in $G$. By solving \eqref{E:12} we obtain that a general solution is of $C^r$-class. Moreover, by definition of $G$ any solution is defined on interval $[0,1]$. Hence, there exists a $C^r$ diffeomorphism $\vf:(\rr^n,0)\rightarrow (\rr^n,0)$ given by formula $\vf(x)=y_x(1)$, where $y_x:[0,1]\rightarrow \rr^n$ is solution of system \eqref{E:12} with initial condition $y_x(0)=x$. 

Note that for any $x\in U$, 
\begin{equation}\label{E:13}
F(t,y_x(t))=const. \quad \textrm{ in }[0,1].
\end{equation}
Indeed, from definition of $W$ we derive the formula
$$
[1,W(\xi,x)]=\frac{1}{X_1(\xi,x)-1}(X(\xi,x)-e_1)\quad \textrm{ for }(\xi,x)\in G,
$$
where $e_1=[1,0,\dots,0]\in\mathbb{R}^{n+1}$ and $[1,W]:G\rightarrow \mathbb{R}\times \mathbb{R}^n$. Thus, if we denote by $\left\langle a,b\right\rangle$ the scalar product of two vectors $a$, $b$, then for $t\in [0,1]$, we have
\begin{align*}
&\frac{dF(t,y_x(t))}{dt}= \left\langle (\nabla F)(t,y_x(t)),[1,W(t,y_x(t))]\right\rangle\\
&=\frac{1}{X_1(t,y_x(t))-1}\left( \left\langle (\nabla_x F)(t,y_x(t)),X(t,y_x(t))\right\rangle -\frac{\partial F}{\partial \xi}(t,y_x(t))\right)\\
&=\frac{1}{X_1(t,y_x(t))-1}(g(y_x(t))-f(y_x(t))-g(y_x(t))+f(y_x(t)))=0.
\end{align*}
This gives \eqref{E:13}. Finally, \eqref{E:13} yields
$$
f(x)=F(0,x)=F(0,y_x(0))=F(1,y_x(1))=F(1,\varphi(x))=g(\varphi(x)).
$$
for $x\in U$. This ends the proof.
 
$\hfill \Box$

\section{Additional results}\label{sub:3}

Under assumptions of Theorem \ref{the:2}, note that in the situation when $r=0$, we have that $\nabla f$ is a continuous mapping and we can't use Lemma \ref{lem:2}, so we should assume that $\nabla f$ is a locally lipschitzian mapping. Moreover, contition \eqref{fun.zal} has the form 
$$
\left|(g-f)(x)\right|\leq C\left|\nabla f(x)\right|^{2}, \quad x\in U,
$$
so inequalities \eqref{E:7} and \eqref{E:7'} are false. But when we will assume additionally that 
$$
\left| \nabla (g-f)\right| \leq \left|\nabla f(x)\right|^{2}, \quad x\in U,
$$
for some constant $C'>0$, then those inequalities will be already true. Moreover, from those inequalities we obtain that vector field \eqref{pole1} is continuous in $G$ and locally lipschitzian in $G\backslash (\mathbb{R}\times Z)$. Therefore system \eqref{E:12} has a global uniqueness of solutions property only in $G\backslash (\mathbb{R}\times Z)$. But from \eqref{pole2} and Lemma \ref{lem:4} we have that \eqref{E:12} has a global uniqueness of solutions property in $G$. Therefore, due to the above, we obtain the following sufficient condition for $C^0$-right equivalence. Additionally to obtain that mapping $\nabla F $ is locally lipschitzian we should assume that $\nabla g$ is locally lipschitzian. 

\begin{theorem}\label{the:3}
Let $f,g:(\rr^n,0)\rightarrow (\rr,0)$ be $C^{1}$ functions such that $\nabla f, \nabla g$ are locally lipschitzian mappings, $r\in \nn$. If $\nabla f(0)=0$ and there exists a neigbourhood $U$ of $0\in \rr^n$ and constants $C,C'>0$ such that 
$$
\left|(g-f)(x)\right|\leq C\left|\nabla f(x)\right|^{2},\quad \left| \nabla(g-f)(x)\right|\leq C'\left|\nabla f(x)\right|^{2} \quad x\in U,
$$
then $f$ and $g$ are $C^0$-right equivalent. 
\end{theorem}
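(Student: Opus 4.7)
The plan is to imitate the proof of Theorem \ref{the:2} in the special case $r=0$, with the three modifications forced by the weaker regularity. Since $\nabla f$ is locally Lipschitz, Lemma \ref{lem:2} applied componentwise yields $|\nabla f(x)|\le A\,\dist(x,Z)$ in a neighbourhood of $0$, where $Z$ is the zero set of $\nabla f$. Introduce the homotopy $F(\xi,x)=f(x)+\xi(g-f)(x)$ on $G=\{(\xi,x):|\xi|<\delta\}$ with some integer $\delta>2$. The extra hypothesis $|\nabla(g-f)(x)|\le C'|\nabla f(x)|^{2}$ is tailored precisely so that, after shrinking $U$, the two-sided comparison $C_3|\nabla F(\xi,x)|\le|\nabla f(x)|\le C_1|\nabla F(\xi,x)|$ of \eqref{E:7} and \eqref{E:7'} carries over verbatim.

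Next, define the mapping $X$ by the same formula as in Section \ref{sub:2}. The computation of Step \ref{st:1} specialised to $r=0$ gives the linear bound $|X_i(\xi,x)|\le A'\,\dist(x,Z)$, so $X$ extends continuously to $G$ by setting $X=0$ on $\rr\times Z$; away from $\rr\times Z$, the local Lipschitz regularity of $\nabla g$ (hence of $\nabla F$) makes $X$ locally Lipschitz. Define the vector field $W$ by \eqref{pole1}. Estimate \eqref{pole2} still yields $|X_1-1|\ge 1/2$ after a further shrinking of $U$, so $W$ is continuous on $G$, locally Lipschitz on $G\setminus(\rr\times Z)$, and satisfies $|W(t,x)|\le 2A'\,\dist(x,Z)$ throughout a neighbourhood of $\rr\times Z$ in $G$.

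Consequently, the ODE $dy/dt=W(t,y)$ has uniqueness of solutions on $G\setminus(\rr\times Z)$; invoking Lemma \ref{lem:4} with $V=Z$ upgrades this to global uniqueness on $G$. Together with continuity of $W$, this yields existence and continuous dependence on the initial datum, so every solution is defined on $[0,1]$ and $\vf(x):=y_x(1)$ is a local homeomorphism fixing $0$, the inverse being the time-one map of the backward equation $dz/ds=-W(1-s,z)$. The algebraic identity $\langle\nabla F(\xi,x),[1,W(\xi,x)]\rangle=0$, verified in Section \ref{sub:2}, is a purely pointwise consequence of the definitions of $X$ and $W$ and uses no differentiability of $W$; applied along the $C^{1}$ curve $y_x$ it gives $\frac{d}{dt}F(t,y_x(t))=0$, so $f(x)=F(0,x)=F(1,\vf(x))=g(\vf(x))$. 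The principal subtlety is that $W$ is not Lipschitz along $\rr\times Z$, so classical uniqueness theorems are unavailable there; the second hypothesis on $\nabla(g-f)$ is introduced precisely to make $W$ vanish linearly in $\dist(x,Z)$, which is exactly the condition required by Lemma \ref{lem:4} to transfer uniqueness across the exceptional set $\rr\times Z$.
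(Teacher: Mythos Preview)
Your proposal is correct and follows essentially the same route as the paper's own argument: adapt the proof of Theorem~\ref{the:2} to $r=0$, using the Lipschitz assumption on $\nabla f$ to invoke Lemma~\ref{lem:2}, the extra hypothesis on $\nabla(g-f)$ to recover \eqref{E:7}--\eqref{E:7'}, and Lemma~\ref{lem:4} together with the linear bound $|W|\le C\dist(x,Z)$ coming from \eqref{E:8} and \eqref{pole2} to extend uniqueness across $\rr\times Z$. Your write-up is in fact a bit more explicit than the paper's sketch (you spell out continuous dependence and the inverse homeomorphism), but the structure and all key ingredients coincide.
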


From Theorem \ref{the:2} and Corollary \ref{wn:lem:1} we obtain immediately
\begin{theorem}\label{the:4}
Let $f,g:(\rr^n,0)\rightarrow (\rr,0)$ be $C^{r+1}$ functions, $r\in \nn$. If $\nabla f(0)=0$ and $(g-f)\in (\mathcal{J}_fC^{r}(n))^{r+2}$ then $f$ and $g$ are $C^r$-right equivalent. 
\end{theorem}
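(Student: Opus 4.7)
The plan is to use the Thom--Levine path method. Consider the homotopy
$$F(\xi,x) = f(x) + \xi\bigl(g(x) - f(x)\bigr),$$
which interpolates between $f$ at $\xi=0$ and $g$ at $\xi=1$. If I can construct a $C^r$ family of diffeomorphisms $\psi_\xi$ with $\psi_0 = \mathrm{id}$ and $F(\xi,\psi_\xi(x)) = f(x)$ for every $\xi \in [0,1]$, then $\vf := \psi_1$ satisfies $g \circ \vf = f$. Differentiating the conservation law in $\xi$ reduces the problem to an ODE $\frac{dy}{d\xi} = W(\xi,y)$, where $W$ is forced by $\langle \nabla_x F, W\rangle = -(g-f)$. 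Off the zero set $Z := \{\nabla f = 0\}$ the natural choice is
$$W(\xi,x) = -\frac{(g-f)(x)}{|\nabla_x F(\xi,x)|^2}\,\nabla_x F(\xi,x),$$
extended by $0$ on $Z$.

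First I would verify the comparison $|\nabla_x F(\xi,x)| \sim |\nabla f(x)|$ on a strip $G = (-\delta,\delta)\times U$ with $\delta > 1$. The $|m|=1$ case of \eqref{fun.zal} gives $|\nabla(g-f)(x)| \leq C|\nabla f(x)|^{r+1}$, and since $r \geq 1$ the perturbation $\xi\,\nabla(g-f)$ is absorbed by $\frac12 |\nabla f|$ for $U$ small. Lemma \ref{lem:2} applied to the locally Lipschitz map $\nabla f$ further yields $|\nabla f(x)| \leq A\,\dist(x,Z)$, linking the gradient size to distance from the singular set.

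The heart of the argument is showing that $W$ is $C^r$ on $G$. Continuity together with the bound $|W(\xi,x)| \lesssim \dist(x,Z)^{r+1}$ follows from the $|m|=0$ case of \eqref{fun.zal} combined with the comparisons above. For higher derivatives on $G\setminus(\rr\times Z)$, I would expand $\partial^\alpha W$ by Leibniz and control the factors $\partial^\beta\bigl(1/|\nabla_x F|^2\bigr)$ via Lemma \ref{lem:3} (with $\eta = \nabla f$), producing the clean estimate
$$|\partial^\alpha W(\xi,x)| \leq A''\,\dist(x,Z)^{r+1-|\alpha|},\qquad |\alpha|\leq r.$$
An induction on $|\alpha|$ based on this decay rate then shows that every partial derivative of order $\leq r$ exists on $Z$ and vanishes there, extending $W$ to a $C^r$ field on all of $G$.

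Once $W \in C^r \subset C^1$, the ODE $dy/d\xi = W(\xi,y)$ has unique solutions, and boundedness of $W$ together with $\delta > 1$ forces every trajectory starting near $0$ to survive on $[0,1]$. Setting $\vf(x) := y_x(1)$ produces a $C^r$ diffeomorphism fixing $0$, and the identity $\frac{d}{d\xi}F(\xi,y_x(\xi)) = 0$ (immediate from the defining relation for $W$) gives $f(x) = F(0,x) = F(1,\vf(x)) = g(\vf(x))$. The main obstacle is clearly the $C^r$ regularity of $W$ at the singular set $Z$: the field is obtained by dividing by $|\nabla_x F|^2$, which vanishes precisely on $Z$, and extracting smoothness there from pointwise inequalities requires the full strength of \eqref{fun.zal}. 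The exponent $r+2-|m|$ is calibrated exactly so that, after Lemma \ref{lem:3} introduces a singularity of order $|\nabla_x F|^{-2|\beta|-2}$, enough powers of $|\nabla f|$ survive the Leibniz expansion to leave the positive margin $\dist(x,Z)^{r+1-|\alpha|}$ on which the induction closes.
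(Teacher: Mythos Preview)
Your approach is essentially the paper's: Theorem~\ref{the:4} is deduced in one line from Corollary~\ref{wn:lem:1} (which turns the hypothesis $(g-f)\in(\mathcal{J}_fC^r(n))^{r+2}$ into the pointwise inequalities~\eqref{fun.zal}) together with Theorem~\ref{the:2}, and the proof of Theorem~\ref{the:2} is exactly the Thom--Levine path argument you have sketched, with the same lemmas (Lemma~\ref{lem:2} for $|\nabla f|\le A\,\dist(x,Z)$, Lemma~\ref{lem:3} for derivatives of $1/|\nabla F|^2$) and the same inductive extension of $\partial^\alpha W$ across $Z$. Two minor points: you invoke~\eqref{fun.zal} without first deriving it from the Jacobi-ideal hypothesis, so a sentence citing Corollary~\ref{wn:lem:1} should be added; and the paper builds its field from the \emph{full} gradient $\nabla F=(\partial_\xi F,\nabla_x F)$ and then renormalises via $W=\frac{1}{X_1-1}(X_2,\dots,X_{n+1})$, whereas you project onto $\nabla_x F$ directly---both choices satisfy the same conservation law and admit the same $C^r$ estimates, so this is purely cosmetic.
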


It seems that Bochnak Theorem (\cite[Theorem 1]{boch}) is stronger than Theorem \ref{the:4}, because in the first theorem we assume that power of Jacobi ideal is constant, whereas in the last theorem this power is depend on $r$. But in the other hand in Theorem \ref{the:4} assumption about class of $f,g$ is weaker than in Bochnak Theorem. So it is difficult to say which theorem is better.

\section{{\L}ojasiewicz exponent in the gradient inequality}\label{sec:2}

Under the additional assumption of analyticity of functions, we will show that if two functions are $C^1$-right equivalent then their {\L}ojasiewicz exponents in the gradient inequality are the same.

Let $f:(\rr^n,0)\rightarrow(\rr,0)$ be an analytic function. It is known that there exists a neighbourhood of $0\in \rr^n$ and constants $C,\eta>0$ such that the following \emph{{\L}ojasiewicz gradient inequality} holds
$$
|\nabla f(x)|\geq C|f(x)|^{\eta}, \quad \textrm{ for }x\in U.
$$
The smallest exponent $\eta$ in the above inequality is called \emph{{\L}ojasiewicz exponent in the gradient inequality} and is denoted by $\Go(f)$ (cf. \cite{Loj2}, \cite{Loj3}).

\begin{proposition}\label{pro:2}
Let $f,g:(\rr^n,0)\rightarrow(\rr,0)$ be analytic functions. If $f$ and $g$ are $C^1$-right equivalent then $\Go(f)=\Go(g)$.
\end{proposition}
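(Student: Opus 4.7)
The plan is to exploit the chain rule on the relation $f = g\circ\vf$. Since $\vf$ is a $C^{1}$ diffeomorphism fixing the origin, the Jacobian $D\vf$ and its inverse $(D\vf)^{-1}$ are continuous, hence bounded in operator norm on some neighbourhood $U$ of $0\in\rr^{n}$. Differentiating yields $\nabla f(x)=(D\vf(x))^{T}\nabla g(\vf(x))$, and the boundedness of $D\vf$ and $(D\vf)^{-1}$ gives positive constants $c_{1},c_{2}$ with
\begin{equation*}
c_{1}|\nabla g(\vf(x))|\leq |\nabla f(x)|\leq c_{2}|\nabla g(\vf(x))|,\quad x\in U.
\end{equation*}

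Next, I would transfer the {\L}ojasiewicz gradient inequality from $g$ to $f$. Fix any exponent $\eta$ for which $|\nabla g(y)|\geq C|g(y)|^{\eta}$ holds on some neighbourhood $V$ of $0$, with some $C>0$. Shrinking $U$ so that $\vf(U)\subset V$ and using $f(x)=g(\vf(x))$, substitution gives
\begin{equation*}
|\nabla f(x)|\geq c_{1}|\nabla g(\vf(x))|\geq c_{1}C|g(\vf(x))|^{\eta}=c_{1}C|f(x)|^{\eta},\quad x\in U.
\end{equation*}
Thus the same exponent $\eta$ satisfies the {\L}ojasiewicz gradient inequality for $f$. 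Taking the infimum over admissible $\eta$ for $g$ yields $\Go(f)\leq \Go(g)$.

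For the reverse inequality, I would invoke the symmetry of $C^{1}$-right equivalence: from $f=g\circ\vf$ we get $g=f\circ\vf^{-1}$, where $\vf^{-1}$ is again a $C^{1}$ diffeomorphism fixing $0$. Repeating the above argument with the roles of $f$ and $g$ interchanged gives $\Go(g)\leq\Go(f)$, and combining the two bounds yields $\Go(f)=\Go(g)$. I do not expect any serious obstacle; the only care required is to ensure that all neighbourhoods are chosen small enough that $\vf$ maps into the domain where the {\L}ojasiewicz inequality for $g$ (respectively $f$) is available, which is automatic since $\vf$ is continuous and sends $0$ to $0$.
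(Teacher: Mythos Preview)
Your proof is correct and follows essentially the same route as the paper's: chain rule applied to $f=g\circ\vf$, continuity of the Jacobian of a $C^{1}$ diffeomorphism to bound $|\nabla f|$ against $|\nabla g\circ\vf|$, substitution to transfer the gradient inequality, and then the symmetric argument via $\vf^{-1}$ for the reverse bound. The only cosmetic difference is that you phrase the transfer for an arbitrary admissible exponent $\eta$ before passing to the infimum, whereas the paper works directly with $\Go(g)$; the arguments are otherwise identical.
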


\begin{proof}
By the assumption there exists a $C^1$ diffeomorphism \linebreak $\vf:(\rr^n,0)\rightarrow (\rr^n,0)$ such that $g=f\circ \vf$ and $f=g\circ \vf^{-1}$. Moreover there exists a neighbourhood $U$ of $0\in \rr^n$ and a constant $C>0$ such that
$$
|\nabla g(x)|\geq C|g(x)|^{\Go(g)}, \quad \textrm{ for }x\in U.
$$
By $J(\vf)$ we denote the Jacobian matrix of mapping $\vf$ and by $\left\|J(\vf)\right\|$ the norm of this matrix. Note that, diminishing $U$ if necessary, there exists a constant $A>0$ such that  
$$
\left\|J(\vf(x))\right\|\leq A,  \quad \textrm{ for }x \in U.
$$
Moreover, $\nabla g= \nabla (f\circ \vf)=\nabla f(\vf)\cdot J(\vf)$ and from the above,
$$
|\nabla g(x)| \leq |\nabla f(\vf(x))|\left\|J(\vf(x))\right\|\leq A |\nabla f(\vf(x))| \quad \textrm{ for }x \in U.
$$
Hence, diminishing $U$ if necessary
$$
|\nabla f(\vf(x))| \geq \frac{1}{A} |\nabla g(x)|\geq \frac{C}{A}|g(x)|^{\Go(g)}= \frac{C}{A}|f(\vf(x))|^{\Go(g)} \;\; \textrm{ for }x \in U.
$$
Therefore $\Go(f)\geq \Go(g)$. Analogously we get $\Go(f)\leq \Go(g)$. Hence we have $\Go(f)=\Go(g)$.
\end{proof}

From Proposition \ref{pro:2} and Theorem \ref{the:4} we obtain

\begin{cor}\label{col:1}
Let $f,g:(\rr^n,0)\rightarrow(\rr,0)$ be analytic functions. If $\nabla f(0)=0$ and $(g-f)\in \mathcal{J}_f^3$ then $\Go(f)=\Go(g)$, where $J_f$ denotes the Jacobi ideal of $f$ in the ring of germs of analytic functions $(\rr^n,0)\rightarrow \rr$.
\end{cor}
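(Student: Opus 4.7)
The proof is a short chain: feed the hypothesis into Theorem \ref{the:4} with $r=1$ to upgrade the algebraic condition to a $C^1$-right equivalence, then invoke Proposition \ref{pro:2} to transfer the Łojasiewicz exponent. The plan therefore is to verify that the analytic ideal containment $(g-f)\in\mathcal{J}_f^3$ implies the $C^1$ ideal containment $(g-f)\in(\mathcal{J}_fC^1(n))^3$ required by Theorem \ref{the:4}.

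First I would unpack the hypothesis: by definition of $\mathcal{J}_f^3$ in the ring of analytic germs $(\rr^n,0)\to\rr$, there exist analytic germs $h_{ijk}$, for $i,j,k\in\{1,\dots,n\}$, such that
\[
(g-f)(x)=\sum_{i,j,k} h_{ijk}(x)\,\frac{\partial f}{\partial x_i}(x)\frac{\partial f}{\partial x_j}(x)\frac{\partial f}{\partial x_k}(x)
\]
in a neighbourhood of $0$. Since every analytic germ is in particular $C^\infty$, hence $C^1$, the coefficients $h_{ijk}$ belong to $C^1(n)$, and the partial derivatives $\partial f/\partial x_i$ belong to $C^1(n)$ as well (because $f$ is analytic, thus $C^2$). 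Reading the displayed identity inside $C^1(n)$ then shows $(g-f)\in(\mathcal{J}_fC^1(n))^3$. The hypothesis $\nabla f(0)=0$ is directly inherited, and $f,g$ are of class $C^{r+1}=C^2$.

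Next I would apply Theorem \ref{the:4} with $r=1$ to conclude that $f$ and $g$ are $C^1$-right equivalent, and then Proposition \ref{pro:2}, whose hypotheses (analyticity of $f,g$ and $C^1$-right equivalence) are now both met, to deduce $\Go(f)=\Go(g)$.

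There is no substantive obstacle: the only thing to watch is the distinction between the analytic Jacobi ideal $\mathcal{J}_f$ appearing in the statement and the $C^1$ Jacobi ideal $\mathcal{J}_fC^1(n)$ used in Theorem \ref{the:4}, and this is handled transparently by viewing analytic germs as $C^1$ germs.
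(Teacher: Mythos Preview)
Your proof is correct and follows exactly the route the paper takes: the corollary is stated as an immediate consequence of Theorem \ref{the:4} (with $r=1$) and Proposition \ref{pro:2}. Your added remark that the analytic containment $(g-f)\in\mathcal{J}_f^3$ passes to $(g-f)\in(\mathcal{J}_fC^1(n))^3$ because analytic germs are $C^1$ is the one detail the paper leaves implicit, and you handle it correctly.
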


\subsection*{Acknowledgements}
I am deeply grateful to Stanis{\l}aw Spodzieja for his valuable comments and advices.


\begin{thebibliography}{9999}

\bibitem{boch} J. Bochnak, \emph{Rel\'evement des jets}, S\'eminaire Pierre Lelong (Analyse), (ann\'ee 1970-1971), Lecture Notes in Math., Vol. 275, 1972, 106-118.

\bibitem{Kui} N. H. Kuiper, \emph{$C^1$-equivalence of functions near isolated critical points}. In \emph{Symposium on Infinite Dimensional Topology (Louisiana State Univ., Baton Bouge, La., 1967)}, 199-218. Ann. of Math. Studies, No. 69. Princeton Univ. Press, Princeton, N. J., 1972.

\bibitem{Kuo} T. C. Kuo, \emph{On $C^0$-sufficiency of jets of potential functions}, Topology 8, (1969), 167-171.

\bibitem{Loj2} S. {\L}ojasiewicz, \emph{Ensembles semi-analytiques}, preprint IHES, 1965.

\bibitem{Loj3} S. {\L}ojasiewicz, \emph{Sur les trajectoires du gradient d'une function analytique}, Geometry Seminars, 1982-1983, Univ. Stud. Bologna, Bologna 1984, 115-117.

\bibitem{mgs} P. Migus, \emph{$C^r$-right equivalence of analytic functions}, 2014arXiv1409.4309 http://arxiv.org/pdf/1409.4309v2,


\bibitem{Sp} B. Osi\'nska-Ulrych, G. Skalski, S. Spodzieja, \emph{On $C^0$-sufficiency of jets}. Analytic and Algebraic Geometry, {\L}\'od\'z University Press, (2013), 95-113.


\end{thebibliography}
\end{document}